\newcommand{\Z}{{\mathbb Z}}
\newcommand{\B}{\mathfrak{B}}
\newcommand{\End}{\operatorname{End}}
\newcommand{\GL}{\operatorname{GL}}
\renewcommand{\O}{{\operatorname{O}}}
\newcommand{\Sp}{{\operatorname{Sp}}}
\newcommand{\Sym}{\mathfrak{S}}
\newcommand{\topp}{\tau}
\newcommand{\bott}{\beta}
\newcommand{\TO}{\dot{T}}
\newcommand{\Lamdot}{\Lambda_1}
\newcommand{\Lamddot}{\Lambda_2}
\newcommand{\ep}{\varepsilon}
\newtheorem{thm}{Theorem}
\newtheorem*{thm*}{Theorem}
\newtheorem*{lem*}{Lemma}
\newtheorem{prop}{Proposition}
\newtheorem*{prop*}{Proposition}
\newtheorem*{cor*}{Corollary}
\theoremstyle{definition}
\newtheorem*{defn*}{Definition}
\newtheorem*{example*}{Example}
\newtheorem*{examples*}{Examples}
\newtheorem*{rmk*}{Remark}
\numberwithin{equation}{section}
\begin{document}
\title{A characteristic-free decomposition of tensor space as a 
Brauer algebra module}
\author{S.R.~Doty} \address{Mathematics and Statistics, Loyola
  University Chicago, Chicago, Illinois 60626 U.S.A.}
\email{doty@math.luc.edu}

\begin{abstract}\noindent
We obtain a characteristic-free decomposition of tensor space,
regarded as a module for the Brauer centralizer algebra. 
\end{abstract}

\maketitle

\section*{Introduction}\noindent
The representation theory of a symmetric group $\Sym_r$ on $r$ letters
(see \cite{James, Fulton}) starts with the transitive permutation
modules $M^\lambda$ indexed by the partitions $\lambda$ of $r$.  For
any field $k$, tensor space $(k^n)^{\otimes r}$, regarded as a
$k\Sym_r$-module via the place permutation action, admits a direct sum
decomposition into a direct sum of the $M^\lambda$. (If $n<r$ then not
all of the $M^\lambda$ appear in the decomposition.) The purpose of
this paper is to give a similar characteristic-free decomposition of
tensor space $(k^n)^{\otimes r}$, regarded as a module for the Brauer
algebra. (Characteristic 2 is excluded from some results, in order to
avoid technicalities.)  The main results are summarized together in
Section \ref{sec:main} below, for the convenience of the reader.

A different characteristic-free decomposition of $(k^n)^{\otimes r}$
as a module for the Brauer algebra was previously obtained in
\cite{HP}, by working with the action defined in terms of the standard
bilinear form on $k^n$. By choosing a different bilinear form, we
obtain a more refined decomposition than that of \cite{HP}, in most
cases, which should give more information.

Our approach is motivated by Schur--Weyl duality (see \cite{Schur,
  Doty:SWD, BD, DDH, DH}), although its full generality is not used
here. All we need is the fact that the action of the Brauer algebra
commutes with that of a suitable classical group.

Our results provide new characteristic-free representations $N^\xi$ of
the Brauer algebra, indexed by partitions $\xi$, which may be regarded
as analogues of the classical transitive permutation modules for
symmetric groups. It is hoped that these representations may be of
some use in the study of the representation theory of Brauer algebras,
especially in the non-semisimple case, where little is known.

The paper is organized as follows. After summarizing the main results
in Section \ref{sec:main}, we recall the decomposition of tensor space
regarded as a module for the symmetric group in Section \ref{sec:Sym},
define the Brauer algebra and its action on tensors in Section
\ref{sec:Brauer}, and prove our results in Sections
\ref{sec:symmetric}, \ref{sec:symmetric-odd}, and \ref{sec:skew}.

\section{Main results}\label{sec:main}\noindent
Fix a field $k$ of characteristic different from $2$.  Tensor space
$(k^n)^{\otimes r}$ is regarded as a module for the Brauer algebra
$\B_r(n)$ via an action (see Section \ref{sec:Brauer}) defined by the
nondegenerate symmetric bilinear form $(\ ,\ )$ on $k^n$ such that
$(e_i,e_{j'})=\delta_{ij}$, where $e_1, \dots, e_n$ is the standard
basis of $k^n$ and $j' = n+1-j$. This choice of bilinear form is
important for our results.

We will need the set $\Lambda(n,r)$ of $n$-part compositions of $r$,
defined by
\[
\Lambda(n,r) = \{(\lambda_1, \dots, \lambda_n) \in \Z^n \colon 0 \le
\lambda_i\ (\forall i),\ \lambda_1 + \cdots + \lambda_n = r\}. 
\]
For a given positive integer $l$, let sets $\Lambda_1(l,r)$ and
$\Lambda_2(l,r)$ be defined as follows:
\begin{align*}
\Lambda_1(l,r) &= \{(\xi_1, \dots, \xi_l) \in \Z^l \colon 
  |\xi_1| + \cdots + |\xi_l| = r-s,\ 0\le s \le r \}\\
\Lambda_2(l,r) &= \{(\xi_1, \dots, \xi_l) \in \Z^l \colon
       |\xi_1| + \cdots + |\xi_l| = r-2s,\ 0\le 2s \le r\}.
\end{align*}
If $n=2l+1$, we have a surjective map $\pi \colon \Lambda(n,r) \to
\Lambda_1(l,r)$ given by the rule
\[
   \pi(\lambda_1, \dots, \lambda_n) = (\lambda_1 - \lambda_{1'}, \dots,
   \lambda_l - \lambda_{l'}).
\]
If $n=2l$, the same rule defines a surjective map $\pi \colon
\Lambda(n,r) \to \Lambda_2(l,r)$. In either case, the fibers of $\pi$
determine the desired decomposition of tensor space. See Sections
\ref{sec:symmetric}, \ref{sec:symmetric-odd} for combinatorial
descriptions of the fibers, depending on the parity of $n$.

The well known characteristic-free decomposition of $(k^n)^{\otimes
  r}$ as a $k\Sym_r$-module, where $\Sym_r$ is the symmetric group on
$r$ letters, is given by
\[
  (k^n)^{\otimes r} = \textstyle\bigoplus_{\lambda \in \Lambda(n,r)} M^\lambda ,
\]
where $M^\lambda$ is a transitive permutation module for $k\Sym_r$,
realized as the $k$-span of all simple tensors $e_{i_1} \otimes \cdots
\otimes e_{i_r}$ of weight $\lambda$, with $\Sym_r$ acting by place
permutation on the simple tensors.  Given $\xi \in \Lambda_j(n,r)$ for
$j=1,2$ we define $N^\xi := \bigoplus_{\lambda \in \pi^{-1}(\xi)}
M^\lambda$. Then we prove the following:

\begin{thm}\label{thm:1}
  A characteristic-free decomposition (for characteristic $k \ne 2$)
  of $(k^n)^{\otimes r}$ as a $\B_r(n)$-module is given
  by \[(k^n)^{\otimes r} = \textstyle\bigoplus_{\xi \in
    \Lambda_j(l,r)} N^\xi \qquad\quad (j=1,2),\] where $j=1$ if
  $n=2l+1$ and $j=2$ if $n=2l$.
\end{thm}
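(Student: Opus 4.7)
The plan is to observe first that the decomposition
\[
(k^n)^{\otimes r} = \bigoplus_{\xi \in \Lambda_j(l,r)} N^\xi
\]
holds immediately as a $k$-vector space: the fibers $\{\pi^{-1}(\xi) : \xi \in \Lambda_j(l,r)\}$ partition $\Lambda(n,r)$ (using surjectivity of $\pi$ to account for all indices), so the right-hand side just regroups the known decomposition $(k^n)^{\otimes r} = \bigoplus_{\lambda \in \Lambda(n,r)} M^\lambda$. The substance of the theorem is therefore the assertion that each $N^\xi$ is stable under the action of $\B_r(n)$.

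Since $\B_r(n)$ is generated as an algebra by $\Sym_r$ together with the Brauer contraction diagrams $E_1, \dots, E_{r-1}$, and each $M^\lambda$ is already $\Sym_r$-stable (so $N^\xi$ is too), the problem reduces to showing that $E_p \cdot M^\lambda \subseteq N^{\pi(\lambda)}$ for every $\lambda \in \Lambda(n,r)$ and every $p$. I would verify this by a direct computation using the formula for the $\B_r(n)$-action defined via the form $(e_i, e_{j'}) = \delta_{ij}$ from Section \ref{sec:Brauer}: applied to a basis tensor $v = e_{i_1} \otimes \cdots \otimes e_{i_r}$ of weight $\lambda$, the operator $E_p$ produces zero unless $(e_{i_p}, e_{i_{p+1}}) \ne 0$, i.e.\ unless $i_p + i_{p+1} = n+1$, in which case the output is $\sum_{k=1}^{n} e_{i_1} \otimes \cdots \otimes e_{i_{p-1}} \otimes e_k \otimes e_{k'} \otimes e_{i_{p+2}} \otimes \cdots \otimes e_{i_r}$. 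The decisive observation is that both before and after the contraction, the two factors at positions $p, p+1$ form a paired couple $\{j, j'\}$; such a couple contributes $+1$ to each of $\lambda_j$ and $\lambda_{j'}$, and hence $0$ to the difference $\lambda_j - \lambda_{j'}$. Therefore $\pi$ of the weight is unchanged by $E_p$ on every surviving summand, which yields $E_p M^\lambda \subseteq N^{\pi(\lambda)}$ and closes the argument.

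The main obstacle is not really the verification itself, which is essentially a one-line parity/arithmetic calculation once the Brauer action is spelled out, but rather the conceptual fact motivating the whole setup: it is exactly the off-diagonal bilinear form $(e_i, e_{j'}) = \delta_{ij}$ that makes each Brauer contraction create and annihilate paired couples $\{j, j'\}$, so that $\pi$ is respected. With the standard diagonal form one would only obtain the coarser decomposition of \cite{HP}. The distinction between $n = 2l+1$ and $n = 2l$ plays no role in the stability argument; it enters only through the combinatorial identification of the image of $\pi$, which by a straightforward parity count equals $\Lambda_1(l,r)$ for odd $n$ (arbitrary deficit $r-s$) and $\Lambda_2(l,r)$ for even $n$ (deficit $r-2s$ forced to have the parity of $r$, since $\sum (\lambda_i + \lambda_{i'}) = r$ in that case).
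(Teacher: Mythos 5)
Your proposal is correct, but it reaches the conclusion by a genuinely different route than the paper. The paper's proof is conceptual: it observes that each $N^\xi$ is by construction a weight space for the diagonal torus $\TO$ of $\O_n(k)$, and then simply invokes the well-known fact that the actions of $\O_n(k)$ and $\B_r(n)$ on $(k^n)^{\otimes r}$ commute, so every $\TO$-weight space is automatically a $\B_r(n)$-submodule; the remaining content (Propositions in Sections \ref{sec:symmetric} and \ref{sec:symmetric-odd}) is the identification of the image of $\pi$ with $\Lambda_j(l,r)$ and the explicit description of the fibers. You instead bypass the orthogonal group entirely and verify stability directly on algebra generators: since $\B_r(n)$ is generated by the permutation diagrams together with the contractions (the paper records this, citing Brown, with the single generator $c_0$ sufficing), and each $M^\lambda$ is already $\Sym_r$-stable, it is enough to check that a contraction applied to a weight-$\lambda$ tensor is either zero or a sum of tensors whose weights have the same image under $\pi$ as $\lambda$ --- which your computation with the form $(e_i,e_{j'})=\delta_{ij}$ establishes, since a contraction removes and inserts pairs $\{j,j'\}$ and such a pair contributes $0$ to every difference $\lambda_i-\lambda_{i'}$. (For odd $n$ you should note the degenerate case $i_p=i_{p+1}=l+1$, where the ``pair'' contributes $2$ to $\lambda_{l+1}$ rather than $1$ to two distinct coordinates; this still leaves the first $l$ differences, hence $\pi(\lambda)$ as used in the theorem, unchanged, so nothing breaks.) Your argument is more elementary and self-contained, and it makes transparent exactly where the choice of bilinear form enters; the paper's argument is shorter and explains conceptually why the decomposition exists (it is a torus weight-space decomposition), which also feeds directly into the later $\dot{W}$-isomorphisms among the $N^\xi$. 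One small gloss on your side: the identification of the image of $\pi$ with $\Lambda_j(l,r)$ needs both the parity count (necessity, which you give) and an explicit construction of a preimage for each $\xi$ (sufficiency, which the paper supplies and you only assert); this is easy, and in any case the direct-sum identity itself would survive even without it, with empty fibers contributing zero summands.
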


This is nothing but a weight space decomposition of $(k^n)^{\otimes
  r}$, regarded as a module for the diagonal torus in the orthogonal
group $\O_n(k)$, the group of matrices preserving the bilinear form
$(\ ,\ )$ on $k^n$. The proof, which is given in Sections
\ref{sec:symmetric} and \ref{sec:symmetric-odd}, is almost trivial:
the main idea is just the well known fact that the actions of
$\O_n(k)$ and $\B_r(n)$ on $k^n$ commute.

The bilinear form is chosen so that the diagonal tori in $\GL_n(k)$
and $\O_n(k)$ are compatible upon restriction from $\GL_n(k)$ to
$\O_n(k)$, which is what makes everything work. (It is well known that
in Lie theory, our choice of defining form for the orthogonal group,
or one very similar to it, is more natural than the standard defining
form.)

\medskip

In case $n=2l$, we obtain another characteristic-free decomposition of
$(k^n)^{\otimes r}$, with no restriction on the characteristic of $k$,
by replacing the role of the orthogonal group in the above by the
symplectic group $\Sp_{n}(k)$, defined as the set of matrices
preserving the skew-symmetric bilinear form $(\ , \ )$ on $k^n$ given
by $(e_i, e_{j'}) = \varepsilon_i \delta_{i,j}$, where $\varepsilon_j
= 1$ if $j<j'$ and $-1$ otherwise. There is an action of $\B_r(-n)$ on
tensor space $(k^n)^{\otimes r}$, defined in terms of the bilinear
form.

\begin{thm}\label{thm:2}
  If $n=2l$, a characteristic-free decomposition of $(k^n)^{\otimes
    r}$ as a $\B_r(-n)$-module is given by \[(k^n)^{\otimes r} =
  \textstyle\bigoplus_{\xi \in \Lambda_2(l,r)} N^\xi.\] 
\end{thm}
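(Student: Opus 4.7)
The plan is to mimic the proof of Theorem~\ref{thm:1}, swapping $\O_n(k)$ for $\Sp_n(k)$ and the symmetric form for the skew-symmetric one. The essential input is Brauer--Schur--Weyl duality in type $C$: the action of $\B_r(-n)$ on $(k^n)^{\otimes r}$ defined via the specified skew form commutes with the natural diagonal action of $\Sp_n(k)$. This commutation is classical and valid over any field, so in particular $\B_r(-n)$ commutes with the diagonal torus $T \subset \Sp_n(k)$, and consequently every $T$-weight space of $(k^n)^{\otimes r}$ is automatically a $\B_r(-n)$-submodule.

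Next I would carry out the weight-space bookkeeping. Because the form has been chosen so that $(e_i, e_{j'}) = \varepsilon_i \delta_{i,j}$, the maximal torus $T \subset \Sp_n(k)$ appears in the standard basis as the group of diagonal matrices $\operatorname{diag}(t_1,\ldots,t_l,t_l^{-1},\ldots,t_1^{-1})$. Hence $e_i$ is a $T$-weight vector of weight $t_i$ for $i \le l$ and of weight $t_{i'}^{-1}$ for $i > l$. It follows that a simple tensor $e_{i_1} \otimes \cdots \otimes e_{i_r}$ of $\GL_n$-weight $\lambda \in \Lambda(n,r)$ has $T$-weight exactly $\pi(\lambda) = (\lambda_1 - \lambda_{1'}, \ldots, \lambda_l - \lambda_{l'})$; this is the same compatibility between the $\GL_n$- and classical-group tori that drives the proof of Theorem~\ref{thm:1}.

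A short check confirms $\pi(\lambda) \in \Lambda_2(l,r)$: for each $i$ the integer $\lambda_i + \lambda_{i'} - |\lambda_i - \lambda_{i'}| = 2\min(\lambda_i,\lambda_{i'})$ is nonnegative and even, so summing over $i=1,\ldots,l$ gives $r - \sum_{i=1}^l |\lambda_i - \lambda_{i'}| = 2s$ for some $s \ge 0$. Therefore the $T$-weight space of $(k^n)^{\otimes r}$ attached to $\xi \in \Lambda_2(l,r)$ is spanned by those simple tensors whose $\GL_n$-weight lies in $\pi^{-1}(\xi)$, which is exactly $\bigoplus_{\lambda \in \pi^{-1}(\xi)} M^\lambda = N^\xi$. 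Summing over $\xi \in \Lambda_2(l,r)$ then yields the claimed decomposition as $\B_r(-n)$-modules.

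The only real obstacle is to pin down the commutation $[\Sp_n(k), \B_r(-n)] = 0$ for this specific non-standard skew form; once that is in hand, which is covered by the Schur--Weyl duality references already cited, the remainder is elementary torus bookkeeping. No restriction on the characteristic is required, because the argument operates entirely on the integral basis of simple tensors.
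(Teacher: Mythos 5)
Your proposal is correct and follows essentially the same route as the paper: the commutation of the $\Sp_n(k)$- and $\B_r(-n)$-actions for the chosen skew form, followed by the weight-space decomposition for the diagonal torus $\operatorname{diag}(t_1,\dots,t_l,t_l^{-1},\dots,t_1^{-1})$ and the observation that a simple tensor of $\GL_n$-weight $\lambda$ has torus weight $\pi(\lambda)\in\Lambda_2(l,r)$. Your parity check via $2\min(\lambda_i,\lambda_{i'})$ is a minor variant of the paper's signed-sum argument (the paper additionally records surjectivity of $\pi$ onto $\Lambda_2(l,r)$, which guarantees every summand $N^\xi$ is nonzero, but this is not needed for the decomposition itself).
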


Again, this is just a weight space decomposition for the torus of
diagonal matrices in $\Sp_n(k)$.  The proof in this case is quite
similar to the even orthogonal case, and is sketched in Section
\ref{sec:skew}.

\medskip

These results provide a new family $\{ N^\xi \}$ of characteristic
free representations of the Brauer algebra, indexed by $\xi \in
\Lambda_1(l,r)$ or $\Lambda_2(l,r)$. Actually, we show that the
hyperoctahedral group $(\Z/2\Z)^l \rtimes \Sym_l$ acts naturally on
either of the sets $\Lambda_1(l,r)$ or $\Lambda_2(l,r)$ through signed
permutations of the entries of a weight, and modules $N^\xi$ indexed
by weights in the same orbit are all isomorphic, so it suffices to
restrict one's attention to the modules $N^\xi$ indexed by the
dominant weights $\xi$, which are partitions. So, up to isomorphism,
the Brauer algebra direct summands of tensor space are indexed by the
subsets $\Lambda^+_1(l,r)$ or $\Lambda^+_2(l,r)$ of partitions in
$\Lambda_1(l,r)$ or $\Lambda_2(l,r)$, respectively.

The modules $N^\xi$ are defined by gluing various permutation modules
$M^\lambda$ together. The analysis in Sections \ref{sec:symmetric} and
\ref{sec:skew} reveal that when $n =2l$ and $\xi = (\xi_1, \dots,
\xi_l)$ is a partition of $r-2s$ into not more than $l$ parts, for $0
\le 2s \le r$, the various $\lambda$ in the fiber $\pi^{-1}(\xi)$ are
precisely the weights of the form
\[
   (\xi+\nu)\parallel \nu^* \qquad\text{for } \nu \in
\Lambda(l,s)
\]
where $\nu^* = (\nu_s, \dots, \nu_1)$ is the \emph{reverse}
of $\nu = (\nu_1, \dots, \nu_s)$ and where $\parallel$ denotes
concatenation of finite sequences: 
\[
  (a_1, \dots, a_i) \parallel (b_1, \dots, b_j) := (a_1, \dots, a_i,
b_1, \dots b_j).
\]
Hence, in this case $N^\xi$ is the direct sum of $|\Lambda(l,s)|$
permutation modules. In particular, if $s=0$ there is just one
permutation module in $N^\xi$.

In case $n = 2l+1$ the analysis in Section \ref{sec:symmetric-odd}
reveals that if $\xi = (\xi_1, \dots, \xi_l)$ is a given partition of
$r-s$ into not more than $l$ parts, where $0 \le s \le r$, the various
$\lambda$ in the fiber $\pi^{-1}(\xi)$ are precisely the weights of
the form
\[
   (\xi+\nu) \parallel (s-2t)\parallel \nu^* \qquad
\text{for }\nu \in \Lambda(l,t)
\]
as $t$ varies over all possibilities in the range $0 \le 2t \le s$.
Hence, in this case $N^\xi$ is the direct sum of $\sum_{0 \le 2t \le
  s} |\Lambda(l,t)|$ permutation modules.

\section{Symmetric group decomposition of $(k^n)^{\otimes r}$}
\label{sec:Sym}\noindent
Let $k$ be an arbitrary field.  Consider an $n$-dimensional vector
space $k^n$ and its associated group $\GL_n(k)$ of linear
automorphisms. The group acts naturally on the space, and thus also
acts naturally on the $r$-fold tensor product $(k^n)^{\otimes r}$, via
the `diagonal' action:
\begin{equation}\label{eq:GL-action}
  g \cdot (v_1 \otimes \cdots \otimes v_r) = (g \cdot v_1) \otimes
  \cdots \otimes (g \cdot v_r).
\end{equation}
The symmetric group $\Sym_r$ also acts on the right on
$(k^n)^{\otimes r}$, via the so-called `place permutation' action,
which satisfies
\begin{equation}\label{eq:Sym-action}
 (v_1 \otimes \cdots \otimes v_r) \cdot \pi = v_{(1)\pi^{-1}} \otimes
  \cdots \otimes v_{(r)\pi^{-1}}.
\end{equation}
Notice that we adopt the convention that elements of $\Sym_r$ act on
the right of their arguments. Now it is clear from the definitions
that the actions of these two groups commute:
\[
  g \cdot \big( (v_1 \otimes \cdots \otimes v_r) \cdot \pi \big) =
  \big( g \cdot (v_1 \otimes \cdots \otimes v_r) \big) \cdot \pi,
\]
for all $g \in \GL_n(k)$, $\pi \in \Sym_r$.

In order to simplify the notation, we put $V:= k^n$.  We more or less
follow Section 3 of \cite{Green}. The group $\GL_n(k)$ contains an
abelian subgroup $T$ consisting of the diagonal matrices in
$\GL_n(k)$, and this subgroup (being abelian) must act semisimply on
$V^{\otimes r} = (k^n)^{\otimes r}$. This leads in the usual way to a
`weight space' decomposition
\begin{equation}\label{eq:wtspace}
  V^{\otimes r} = \textstyle \bigoplus_{\lambda\in X(T)} V^{\otimes
    r}_\lambda,
\end{equation}
where $\lambda$ varies over the group $X(T)$ of characters $\lambda
\colon T \to k^\times$, and where the weight space $V^{\otimes
  r}_\lambda$ is the linear span of the tensors $v=v_1 \otimes \cdots
\otimes v_r$ such that $t \cdot v = \lambda(t) v$, for all $t \in T$.

Clearly $T$ is isomorphic to the direct product $(k^\times)^n$ of $n$
copies of the multiplicative group $k^\times$ of the field.  Let
$\ep_i \in X(T)$ be evaluation at the $i$th diagonal entry of an
element of $T$. Regarding the abelian group $X(T)$ as an additive
group as usual, observe that $\ep_1, \dots, \ep_n$ is a basis for
$X(T)$, and thus the map $\Z^n \to X(T)$ given by $(\lambda_1, \dots,
\lambda_n) \mapsto \sum_i \lambda_i \ep_i$ is an isomorphism. So we
identify $X(T)$ with $\Z^n$ by means of this isomorphism.

The direct sum in \eqref{eq:wtspace} is formally taken over $X(T)$;
however, many of the summands are actually zero.  It is easy to check
that the weight space decomposition of $V$, regarded as a $T$-module,
is given by $V = V_{\ep_1} \oplus \cdots \oplus V_{\ep_n}$. It follows
immediately that the set of weights of $V^{\otimes r}$ is the set
\[
  \Lambda(n,r) = \{ (\lambda_1, \dots \lambda_n) \in \Z^n \colon
  \lambda_i \ge 0, \lambda_1 + \cdots + \lambda_n = r \}
\]
of $n$-part compositions of $r$, under the isomorphism of $X(T)$ with
$\Z^n$. Thus we may write \eqref{eq:wtspace} in the better form 
\begin{equation} \label{eq:wtspaceLambda}
  V^{\otimes r} = \textstyle \bigoplus_{\lambda\in \Lambda(n,r)} M^\lambda 
\end{equation}
where we have, partly to simplify notation but also to serve
tradition, put $M^\lambda := V^{\otimes r}_\lambda$. Since the actions
of $\Sym_r$ and $\GL_n(k)$ commute, each $M^\lambda$ is a
$k\Sym_r$-module, so \eqref{eq:wtspaceLambda} gives a decomposition of
$V^{\otimes r}$ as $k\Sym_r$-modules.

Let us describe the vector space $M^\lambda$ in greater detail.  Let
$e_1, \dots, e_n$ be the standard basis of $V = k^n$. Then
$M^\lambda$, for any $\lambda \in \Lambda(n,r)$, has a basis given by
the set of simple tensors $e_{i_1} \otimes \cdots \otimes e_{i_r}$
such that in the multi-index $(i_1, \dots, i_r)$ there are exactly
$\lambda_1$ occurrences of $1$, $\lambda_2$ occurrences of $2$, and so
forth. Evidently the action of the symmetric group $\Sym_r$ permutes
such simple tensors transitively, so $M^\lambda$ is in fact a
transitive permutation module. The representation theory of $\Sym_r$
over $k$ starts with these permutation modules (see e.g.\ \cite{James,
  Fulton}) usually defined rather differently. At this point we could
introduce row standard tableaux of shape $\lambda$ (or, equivalently,
the ``tabloids'' of \cite{James}) to label our basis elements of
$M^\lambda$, but we shall have no need of such combinatorial gadgets.

The symmetric group $\Sym_n$ can be identified with the Weyl group $W$
of $\GL_n(k)$. (Recall that the theory of BN-pairs (due to J.~Tits)
can be used to define $W$ in any $\GL_n(k)$ by a uniform method,
including the case when $k$ is finite.)  The group $W$ may be
identified with the subgroup of permutation matrices of $\GL_n(k)$, so
it acts naturally (on the left) on $V^{\otimes r}$ by restriction of
the action of $\GL_n(k)$. Moreover, $W=\Sym_n$ acts on the set $\Z^n$
by
\begin{equation}
  w^{-1}(\lambda_1, \dots, \lambda_n) =
  (\lambda_{w(1)}, \dots, \lambda_{w(n)}). 
\end{equation}
This action stabilizes the set $\Lambda(n,r)$, so we have also an
action of $W$ on $\Lambda(n,r)$. Each $W$-orbit of $\Lambda(n,r)$
contains exactly one \emph{dominant} weight: a weight $\lambda =
(\lambda_1, \dots, \lambda_n)$ such that $\lambda_1 \ge \lambda_2 \ge
\cdots \ge \lambda_n$. Denote the set of dominant weights in
$\Lambda(n,r)$ by $\Lambda^+(n,r)$. This set may be identified with
the set of partitions of $r$ into not more than $n$ parts. The
following is immediate from Proposition (3.3a) of \cite{Green}.

\begin{prop}\label{prop:1}
  For any $w \in W$, the right $k\Sym_r$-modules $M^\lambda$ and
  $M^{w(\lambda)}$ are isomorphic. 
\end{prop}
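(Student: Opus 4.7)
The plan is to exploit two basic facts: (i) the actions of $\GL_n(k)$ and $\Sym_r$ on $V^{\otimes r}$ commute, and (ii) the Weyl group $W = \Sym_n$ sits inside $\GL_n(k)$ as the subgroup of permutation matrices. Together these imply that for each $w \in W$, left multiplication by $w$ defines a $k\Sym_r$-module automorphism $\phi_w \colon V^{\otimes r} \to V^{\otimes r}$. The entire proof then reduces to checking that $\phi_w$ carries the weight space $M^\lambda$ bijectively onto $M^{w(\lambda)}$.

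To verify this, I would fix $v \in M^\lambda$ and $t \in T$, and use the fact that $w$ normalizes $T$ to compute
\[
  t \cdot (w \cdot v) = w \cdot (w^{-1} t w) \cdot v
       = \lambda(w^{-1} t w)\, (w \cdot v).
\]
The character $t \mapsto \lambda(w^{-1} t w)$ is by definition the standard Weyl-group translate $w \cdot \lambda \in X(T)$, and under the identification $X(T) \cong \Z^n$ this agrees with the action of $W$ on $\Z^n$ recorded in the text (a direct check using $w e_i = e_{w(i)}$ shows that both conventions yield $w \cdot (\lambda_1,\dots,\lambda_n) = (\lambda_{w^{-1}(1)},\dots,\lambda_{w^{-1}(n)})$). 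Hence $\phi_w$ sends $M^\lambda$ into $M^{w(\lambda)}$, and applying the same argument to $w^{-1}$ shows $\phi_{w^{-1}}$ sends $M^{w(\lambda)}$ into $M^\lambda$. Since $\phi_{w^{-1}}$ is inverse to $\phi_w$, the restriction is the required $k\Sym_r$-isomorphism.

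There is no genuine obstacle here: the statement is just the general principle that any group element normalizing a maximal torus permutes the torus weight spaces according to its action on weights, applied in a situation where the commuting $\Sym_r$-action automatically makes $\phi_w$ a module map. The only point requiring care is bookkeeping with the convention $w^{-1}(\lambda_1,\dots,\lambda_n) = (\lambda_{w(1)},\dots,\lambda_{w(n)})$ against the character calculation above, which is routine.
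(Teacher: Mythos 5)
Your proof is correct and is essentially the paper's argument: the isomorphism in both cases is left multiplication by the permutation matrix $w$, which is a $k\Sym_r$-map because the $\GL_n(k)$- and $\Sym_r$-actions commute, the paper simply writing this map out on the basis of simple tensors ($e_{i_1}\otimes\cdots\otimes e_{i_r}\mapsto e_{w(i_1)}\otimes\cdots\otimes e_{w(i_r)}$) while you verify that it sends $M^\lambda$ onto $M^{w(\lambda)}$ via conjugation of the torus. Your bookkeeping with the convention $w^{-1}(\lambda_1,\dots,\lambda_n)=(\lambda_{w(1)},\dots,\lambda_{w(n)})$ checks out, so nothing is missing.
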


\begin{proof}
The isomorphism is given on basis elements by mapping a simple tensor
$e_{i_1} \otimes \cdots \otimes e_{i_r}$ of weight $\lambda$ to the
simple tensor $e_{w(i_1)} \otimes \cdots \otimes e_{w(i_r)}$ of weight
$w(\lambda)$.
\end{proof}

Thus, when considering the $M^\lambda$, we may as well confine our
attention to the ones labeled by dominant weights (i.e.\ partitions)
$\lambda \in \Lambda^+(n,r)$.

\section{The Brauer algebra}\label{sec:Brauer}\noindent
A Brauer $r$-diagram (introduced in \cite{Brauer}) is an undirected
graph with $2r$ vertices and $r$ edges, such that each vertex is the
endpoint of precisely one edge. By convention, such a graph is usually
drawn in a rectangle with $r$ vertices each equally spaced along the
top and bottom edges of the rectangle. For example, the picture below
\[ 
\begin{tikzpicture}[scale=0.7]
  \draw (0,0) rectangle (8,3);
  \foreach \x in {0.5,1.5,...,7.5}
    {\fill (\x,3) circle (2pt);
     \fill (\x,0) circle (2pt);}
  \begin{scope}[very thick]
    \draw (0.5,3) -- (3.5,0);
    \draw (2.5,3) arc (180:360:0.5 and 0.25);
    \draw (1.5,3) arc (180:360:1.5 and 0.75);
    \draw (2.5,0) arc (0:180:1 and 0.5);
    \draw (6.5,3) -- (1.5,0);
    \draw (7.5,0) arc (0:180:1.5 and 0.75);
    \draw (5.5,3) -- (6.5,0);
    \draw (7.5,3) -- (5.5,0);
  \end{scope}
\end{tikzpicture}
\]
depicts a Brauer $8$-diagram.  Let $k$ be an arbitrary field. Let
$\B_r(\pm n)$ be the vector space over $k$ with basis the
$r$-diagrams, where we assume that $n$ is even in the negative
case. Brauer defined a natural multiplication of $r$-diagrams such
that $\B_r(\pm n)$ becomes an associative algebra.  In order to
describe the multiplication rule, it is convenient to introduce the
notations $\topp(d)$ and $\bott(d)$ for the sets of vertices along the
top and bottom edges of a diagram $d$.  Then the multiplication rule
works as follows. Given $r$-diagrams $d_1$ and $d_2$, place $d_1$
above $d_2$ and identify the vertices in $\bott(d_1)$ in order with
those in $\topp(d_2)$. The resulting graph consists of $r$ paths whose
endpoints are in $\topp(d_1) \cup \bott(d_2)$, along with a certain
number, say $s$, of cycles which involve only vertices in the middle
row. Let $d$ be the $r$-diagram whose edges are obtained from the
paths in this graph. Then the product of $d_1$ and $d_2$ in $\B_r(\pm
n)$ is given by $d_1 d_2 = (\pm n \cdot 1_k)^s d$.

Now we describe a right action of $\B_r(\pm n)$ on $V^{\otimes r}$,
which depends on the defining bilinear form $(\ ,\ )$. This is the
symmetric form defined in Section \ref{sec:main} in the positive case
and the skew-symmetric form defined in Section \ref{sec:main} in the
negative case.  We always assume characteristic $k \ne 2$ in the
symmetric case. We let $e^*_1, \dots, e^*_n$ be the basis dual to the
standard basis $e_1, \dots, e_n$ of $k^n$ with respect to the bilinear
form, in either case, so that $(e_i, e^*_j) = \delta_{ij}$. Given any
$r$-diagram $d$, let $(d)\varphi$ be the matrix whose $(\underline{i},
\underline{j})$-entry, for $\underline{i} = (i_1, \dots, i_r)$,
$\underline{j}=(j_1,\dots, j_r)$ is determined by the following
procedure:
\begin{enumerate}
\item Label the vertices along the top edge of $d$ from left to right
  with $e_{i_1}, \dots, e_{i_r}$ and label the vertices along the
  bottom edge from left to right with $e^*_{j_1}, \dots, e^*_{j_r}$. 

\item The $(\underline{i}, \underline{j})$-entry of $(d)\varphi$ is
  the product of the values $(u,v)$ over the edges $\ep$ of $d$, where
  for each edge, $u$ and $v$ are the labels on its vertices, ordered
  so that a vertex in $\topp(d)$ precedes one in $\bott(d)$, and from
  left to right within $\topp(d)$ and $\bott(d)$.
\end{enumerate}
This determines the desired action: let $d$ act on $V^{\otimes r}$ as
the linear endomorphism determined by the matrix $(d)\varphi$. Then
$\varphi$ extends linearly to a representation $\varphi\colon
\B_r(\pm n)^{opp} \to \End_k(V^{\otimes r})$.

It will be useful to have a better understanding of the action of
$\B_r(\pm n)$. For this, observe that the $r$-diagrams in which every
edge connects a vertex in the top row to a vertex in the bottom row
correspond to permutations in $\Sym_r$, and their action on
$V^{\otimes r}$ is the same as that defined by \eqref{eq:Sym-action}.
Let us agree to call such diagrams \emph{permutation} diagrams.  Since
we write maps in $\Sym_r$ on the right of their arguments, the
multiplication of diagrams defined above corresponds to composition of
permutations, when restricted to such diagrams. Thus we have a
subalgebra of $\B_r(\pm n)$, namely the subalgebra spanned by the
permutation diagrams, isomorphic to $k\Sym_r$, and this subalgebra
acts on $V^{\otimes r}$ via the usual place-permutation action,
independently of the choice of defining bilinear form $(\ , \ )$.

Now let $c_0$ be the unique $r$-diagram in which the first two
vertices in $\topp(c_0)$ are joined by an edge, and similarly for the
first two vertices in $\bott(c_0)$, with the $j$th vertex in
$\topp(c_0)$ joined to the $j$th vertex in $\bott(c_0)$ for $j=3,
\dots, r$. For instance, in case $r = 8$ the diagram $c_0$
\[ 
\begin{tikzpicture}[scale=0.7]
  \draw (0,0) rectangle (8,3);
  \foreach \x in {0.5,1.5,...,7.5}
    {\fill (\x,3) circle (2pt);
     \fill (\x,0) circle (2pt);}
  \begin{scope}[very thick]
    \draw (0.5,3) arc (180:360:0.5 and 0.25);
    \draw (1.5,0) arc (0:180:0.5 and 0.25);
    \draw (2.5,3) -- (2.5,0);
    \draw (3.5,3) -- (3.5,0);
    \draw (4.5,3) -- (4.5,0);
    \draw (5.5,3) -- (5.5,0);
    \draw (6.5,3) -- (6.5,0);
    \draw (7.5,3) -- (7.5,0);
  \end{scope}
\end{tikzpicture}
\]
is the diagram pictured above.  It is well known (see \cite{Brown})
that $\B_r(\pm n)$ is generated by the permutation diagrams together
with the diagram $c_0$.  This may be argued as follows. Call an edge
in a diagram $d$ \emph{horizontal} if its endpoints both lie in
$\topp(d)$, or both lie in $\bott(d)$. The number of horizontal edges
in the top edge of the enclosing rectangle must equal the number in
the bottom edge.  Put $B_j$ equal to the span of the diagrams with
exactly $2j$ horizontal edges. Then, as a vector space, $\B_r(\pm n) =
B_0 \oplus B_1 \oplus \cdots \oplus B_m$, where $m = \lfloor r/2
\rfloor$, the integer part of $r/2$. Clearly $B_0 = k\Sym_r$. By
acting on $c_0$ on the left or right by permutations, one can generate
$B_1$. Then by picking diagrams in $B_1$ appropriately, one may obtain
a diagram in $B_2$, and thus obtain all diagrams in $B_2$ by again
acting by permutations on the left and right. Continuing in this way,
one eventually generates all diagrams in the algebra.

Thus, in order to unambiguously specify the action of the full algebra
$\B_r(\pm n)$ on $V^{\otimes r}$, we only need to see how the diagram
$c_0$ acts.  This depends on the choice of the defining bilinear form
$(\ , \ )$, and by direct computation we see that in the symmetric
case $c_0$ acts by the rule
\begin{equation}
  \textstyle (e_{i_1} \otimes \cdots \otimes e_{i_r}) \cdot c_0 =
  \delta_{i_1,i'_2} \sum_{j=1}^n e_j \otimes e_{j'} \otimes e_{i_3}
  \otimes \cdots \otimes e_{i_r}.
\end{equation}
In the skew-symmetric case $c_0$ acts by the rule
\begin{equation}
  \textstyle (e_{i_1} \otimes \cdots \otimes e_{i_r}) \cdot c_0 =
  \delta_{i_1,i'_2} \sum_{j=1}^n \ep_j\, e_j \otimes e_{j'} \otimes
  e_{i_3} \otimes \cdots \otimes e_{i_r}.
\end{equation}
These actions are closely related to Weyl's contraction operators in
\cite{Weyl}.

\section{The $\B_r(n)$ decomposition of $(k^n)^{\otimes r}$
in the\\symmetric case, where $n=2l$}\label{sec:symmetric}\noindent 
From now on, until further notice, we assume that the field $k$ has
characteristic not $2$. This avoids technicalities pertaining to the
definition of orthogonal groups over fields of characteristic $2$.  We
define $\O_n(k)$ to be the group of isometries of $V$ with respect to
the symmetric form $(\ ,\ )$ given in Section \ref{sec:main}. Then the
action of $\B_r(n)$ on tensor space $V^{\otimes r} = (k^n)^{\otimes
  r}$, defined in the preceding section, commutes with the natural
action of $\O_n(k)$ (given by restricting the action of $\GL_n(k)$).

Let $\TO$ be the abelian subgroup of $\O_n(k)$ consisting of the
diagonal matrices in $\O_n(k)$. Thus, a diagonal matrix
$\text{diag}(t_1, \dots, t_n) \in \GL_n(k)$ belongs to $\TO$ if and
only if 
\begin{equation}\label{eq:TO}
t_i t_{i'} = 1 \quad\text{ for all $i = 1, \dots, n$.} 
\end{equation}
It will be useful to separate the consideration of the cases where $n$
is even and odd, so we assume that $n=2l$ for the remainder of this
section, and consider the odd case in the next section.

The description of $\TO$ in \eqref{eq:TO} shows in this case that
$\TO$ is isomorphic to the direct product $(k^\times)^l$ of $l = n/2$
copies of the multiplicative group $k^\times$ of the field $k$.  So
the character group $X(\TO)$ is isomorphic to $\Z^l$, so we identify
$X(\TO)$ with $\Z^l$.

There is a group homomorphism $\pi \colon X(T) \to X(\TO)$ given by
restriction: $\pi(\lambda) = \lambda_{\mid \TO}$ for $\lambda \in
X(T)$. Since $\TO \subset T$, given a character $\xi \in
X(\TO)$, one can extend it to a character $\lambda \in X(T)$ such
that $\lambda_{\mid \TO} = \xi$. It follows that the map $\pi$
is surjective.

In terms of the identifications $X(T) = \Z^n$ and $X(\TO) = \Z^l$,
the map $\pi$ is given by the rule
\[
  (\lambda_1, \dots, \lambda_n) \mapsto (\lambda_1 -
   \lambda_{1'}, \dots, \lambda_{l}-\lambda_{l'}). 
\]
We next consider how to characterize the image $\Lamddot(l,r)$ of the
set $\Lambda(n,r)$ under the map $\pi$.

\begin{prop}\label{prop:2}
  When $n = 2l$, the image $\Lamddot(l,r)$ of the set $\Lambda(n,r)$
  under $\pi$ is the set of all $\xi = (\xi_1, \dots, \xi_l) \in \Z^l$
  such that $|\xi_1| + \cdots + |\xi_l|=r-2s$, where $0 \le 2s \le r$.
\end{prop}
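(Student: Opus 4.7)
The plan is to prove the set equality by checking both inclusions, both of which reduce to elementary bookkeeping with the pairs $(\lambda_i, \lambda_{i'})$ for $i = 1, \dots, l$, since $\pi$ treats each such pair independently.

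For the inclusion $\pi(\Lambda(n,r)) \subseteq \Lambda_2(l,r)$, I would start from an arbitrary $\lambda = (\lambda_1, \dots, \lambda_n) \in \Lambda(n,r)$ and set $\xi_i = \lambda_i - \lambda_{i'}$. Two observations suffice: first, the triangle inequality gives $|\xi_i| \le \lambda_i + \lambda_{i'}$, so summing over $i = 1, \dots, l$ yields $\sum_i |\xi_i| \le \sum_{j=1}^n \lambda_j = r$; second, $|\lambda_i - \lambda_{i'}| \equiv \lambda_i + \lambda_{i'} \pmod 2$, so $\sum_i |\xi_i| \equiv r \pmod 2$. Thus $r - \sum_i |\xi_i|$ is a nonnegative even integer, which we may write as $2s$ with $0 \le 2s \le r$.

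For the reverse inclusion, given $\xi \in \Z^l$ with $\sum_i |\xi_i| = r - 2s$ and $0 \le 2s \le r$, I would explicitly construct $\lambda \in \pi^{-1}(\xi)$. Pick any nonnegative integers $a_1, \dots, a_l$ with $a_1 + \cdots + a_l = s$ (e.g.\ $a_1 = s$ and $a_i = 0$ for $i \ge 2$), and for each $i$ define
\[
  (\lambda_i, \lambda_{i'}) = \begin{cases} (\xi_i + a_i,\ a_i) & \text{if } \xi_i \ge 0,\\ (a_i,\ -\xi_i + a_i) & \text{if } \xi_i < 0. \end{cases}
\]
By construction $\lambda_i, \lambda_{i'} \ge 0$ and $\lambda_i - \lambda_{i'} = \xi_i$, while $\lambda_i + \lambda_{i'} = |\xi_i| + 2a_i$. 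Summing, $\sum_{j=1}^n \lambda_j = \sum_i |\xi_i| + 2s = r$, so $\lambda \in \Lambda(n,r)$ and $\pi(\lambda) = \xi$.

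There is no serious obstacle here; the statement is essentially a parity-plus-inequality characterization of the map $(a,b) \mapsto a - b$ on pairs of nonnegative integers, aggregated over the $l$ paired coordinates. The only point requiring mild care is the parity argument in the first inclusion, which is what forces the defect $r - \sum|\xi_i|$ to be even and hence of the form $2s$ (as opposed to the odd case $n = 2l+1$, where the unpaired middle coordinate $\lambda_{l+1}$ absorbs the odd part and yields $\Lambda_1(l,r)$ instead).
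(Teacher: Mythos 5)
Your proof is correct and follows essentially the same route as the paper: the forward inclusion by the same parity-of-a-signed-sum argument (you just make the bound $\sum_i|\xi_i|\le r$ explicit where the paper says ``clearly''), and the reverse inclusion by the same construction, your $(a_1,\dots,a_l)$ playing the role of the paper's $\nu\in\Lambda(l,s)$ added to both entries of each pair $(\lambda_i,\lambda_{i'})$. The only difference is that the paper keeps $\nu$ arbitrary because it later uses the full family of such $\lambda$ to describe the fibers $\pi^{-1}(\xi)$, whereas you fix one choice, which suffices for surjectivity.
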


\begin{proof}
If $\xi = \pi(\lambda)$ for $\lambda \in \Lambda(n,r)$ then
$|\xi_1|+\cdots +|\xi_l|$ satisfies the condition
\[
  |\xi_1|+\cdots +|\xi_l| = \epsilon_1(\lambda_1-\lambda_{1'}) +
  \cdots + \epsilon_l(\lambda_l-\lambda_{l'})
\]
where for each $i=1, \dots, l$ the sign $\epsilon_i$ is defined to be
$1$ if $\lambda_i \ge \lambda_{i'}$ and $-1$ otherwise. This is just a
signed sum of the parts of $\lambda$, so is congruent modulo 2 to the
sum of the parts of $\lambda$. Thus $|\xi_1|+\cdots +|\xi_l| = r-2s$
for some $s\in \Z$. Clearly $0 \le 2s \le r$. This proves the
necessity of the condition for membership in $\Lamddot(l,r)$.

It remains to prove the sufficiency of the condition. Given $\xi \in
\Z^l$ satisfying the condition $|\xi_1| + \cdots + |\xi_l|=r-2s$,
where $0 \le 2s \le r$, we define a corresponding $\mu \in
\Lambda(n,r-2s)$ as follows: put $\mu_i = \xi_i$ if $\xi_i > 0$, put
$\mu_{i'} = -\xi_i$ if $\xi_i < 0$, and put all the other entries of
$\mu = (\mu_1, \dots, \mu_n)$ to zero. Now pick $\nu \in \Lambda(l,s)$
arbitrarily. Then let $\lambda$ be obtained from $\mu$ and $\nu$ by
adding the parts of $\nu$ in order to $(\mu_1, \dots, \mu_l)$ and by
adding the parts of $\nu$ in reverse order to $(\mu_{l+1}, \dots,
\mu_{2l})$, so that
\[
 \lambda = (\mu_1+\nu_1, \dots, \mu_l+\nu_l, \mu_{l+1}+\nu_l,
 \dots, \mu_{2l}+\nu_1).
\]
Then it easily checked that $\pi(\lambda) = \xi$. 
\end{proof}

For each $\xi \in \Lamddot(l,r)$, the proof of the preceding
proposition reveals an algorithm for writing down the members of the
fiber $\pi^{-1}(\xi)$, and in particular shows that the cardinality of
the fiber is $|\Lambda(l,s)|$, where $s$ is as above.  By grouping
terms in the direct sum decomposition \eqref{eq:wtspaceLambda}
according to the fibers we obtain
\begin{equation}\label{eq:wtspace2ell} \textstyle
   (k^n)^{\otimes r} = V^{\otimes r} = \bigoplus_{\xi \in
    \Lamddot(l,r)} \big(\bigoplus_{\lambda \in \pi^{-1}(\xi)}
  M^\lambda \big) = \bigoplus_{\xi \in \Lamddot(l,r)} N^{\xi}
\end{equation}
where we define $N^{\xi}$ for any $\xi \in \Lamddot(l,r)$ by
$N^{\xi}:= \bigoplus_{\lambda \in \pi^{-1}(\xi)} M^{\lambda}$.

The $N^{\xi}$ are just the weight spaces under the action of the
abelian group $\TO$, so \eqref{eq:wtspace2ell} gives the weight space
decomposition of tensor space as a $\TO$-module.

Since the actions of $\O_n(k)$ and $\B_r(n)$ commute, it is clear
that each weight space $N^{\xi}$ for $\xi\in \Lamddot(l,r)$ is
a right $\B_r(n)$-module. Hence \eqref{eq:wtspace2ell} is a
decomposition of tensor space $(k^n)^{\otimes r}$ as a
$\B_r(n)$-module, and we have achieved our goal in the case $n =
2l$.

It remains to notice some isomorphisms existing among the
$\B_r(n)$-modules $N^{\xi}$.

As we already pointed out, the Weyl group $W$ associated to $\GL_n(k)$
acts on $\{ M^\lambda \colon \lambda \in \Lambda(n,r) \}$, and the
orbits are isomorphism classes. It can be expected that the Weyl
group associated to $\O_n(k)$ similarly acts on $\{ N^{\xi} \colon
\xi \in \Lamddot(l,r)) \}$, and again the orbits will be
isomorphism classes. 

The Weyl group $\dot{W}$ of $\O_n(k)$ is isomorphic to the semidirect
product $\{\pm 1\}^l \rtimes \Sym_l$, the group of signed
permutations on $l$ letters. We can realize $\dot{W}$ as a subgroup
of $\O_n(k)$, simply by taking the intersection of $W$ (the Weyl group
of $\GL_n(k)$, realized as the $n \times n$ permutation matrices) with
$\O_n(k)$.  A given $w \in W$ lies within this intersection if and only
if the condition $(e_{w^{-1}(i)} , e_{w^{-1}(j)}) = (e_i, e_j)$ holds
for all $i,j$. Thus, $\dot{W}$ is the set of $w \in W$ such that
\begin{equation}\label{eq:W-criterion}
  \delta_{w^{-1}(i), w^{-1}(j)'} = \delta_{i,j'} \qquad \text{for all }
  i,j = 1, \dots, n.
\end{equation}
It is easy to check by direct calculation that for any given $\sigma \in
\Sym_l$, if we define a corresponding $w_\sigma\in W$ such that
\[
  w_\sigma(i) = 
  \begin{cases}
  \sigma(i) & \text{if } 1 \le i \le l\\
  \sigma(i') & \text{if } l+1 \le i \le 2l  
  \end{cases}
\]
then $\sigma$ satisfies the condition
\eqref{eq:W-criterion}. Furthermore, the transposition $\tau_i$ that
interchanges $i$ with $i'$ also satisfies \eqref{eq:W-criterion}, and
thus $\dot{W}$ may be identified with the subgroup of $W$ generated by
the $w_\sigma$ ($\sigma \in \Sym_l$) and the $\tau_i$ ($i = 1,
\dots, l$).

This subgroup acts on $\Lambda(n,r)$ by restriction of the action of
$W$. This induces a corresponding action of $\dot{W}$ on the set
$\Lamddot(l,r)$, such that $\dot{w}(\xi) = \pi( w(\lambda) )$
if $w \in W$ corresponds to $\dot{w} \in \dot{W}$ and $\xi =
\pi(\lambda)$.  Since $\tau_i$ sends $\xi = (\xi_1, \dots,
\xi_l)$ to $(\xi_1, \dots, \xi_{i-1}, -\xi_i,
\xi_{i+1}, \dots, \xi_l)$, and $w_\sigma$ sends $\xi$
to $\sigma(\xi) = (\xi_{\sigma^{-1}(1)}, \dots,
\xi_{\sigma^{-1}(l)})$, it follows that $\dot{W}$ acts on the
set $\Lamddot(l,r)$ by signed permutations.

Thus, a fundamental domain for this action is the set
$\Lamddot^+(l,r)$ consisting of all $\xi \in \Lamddot(l,r)$
such that $\xi_1 \ge \xi_2 \ge \cdots \ge \xi_l \ge 0$. We call
elements of this set \emph{dominant} orthogonal weights. So, in
other words, each orbit of $\Lamddot(l,r)$ contains a unique
dominant orthogonal weight. Notice that a dominant orthogonal weight
is the same as a partition of not more than $l$ parts.

\begin{prop}\label{prop:3}
  For any $\dot{w} \in \dot{W}$, $\xi \in \Lamddot(l,r)$, the
  right $\B_r(n)$-modules $N^{\xi}$ and $N^{\dot{w}(\xi)}$ are
  isomorphic.
\end{prop}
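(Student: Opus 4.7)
My plan is to upgrade the isomorphism of Proposition \ref{prop:1}, which was stated only as an isomorphism of $k\Sym_r$-modules, to an isomorphism of $\B_r(n)$-modules by exploiting the realization of $\dot{W}$ as a subgroup of $\O_n(k)$. Recall that each $\dot{w} \in \dot{W}$ is, by construction, a permutation matrix $w \in W$ satisfying \eqref{eq:W-criterion}, so in particular $w$ preserves the bilinear form $(\,,\,)$ and lies in $\O_n(k)$. Since the actions of $\O_n(k)$ and $\B_r(n)$ on $V^{\otimes r}$ commute, the diagonal action of $w$ on $V^{\otimes r}$ is automatically a $\B_r(n)$-module endomorphism (and invertible, with inverse given by $w^{-1}$).

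The next step is to identify what acting by $w$ does to the summands in the decomposition \eqref{eq:wtspace2ell}. By Proposition \ref{prop:1}, acting by $w$ sends $M^\lambda$ isomorphically onto $M^{w(\lambda)}$ via $e_{i_1}\otimes\cdots\otimes e_{i_r} \mapsto e_{w(i_1)}\otimes\cdots\otimes e_{w(i_r)}$; this map is manifestly the same whether $w$ is regarded as an element of $\GL_n(k)$ or of $\O_n(k)$. What is new compared to Proposition \ref{prop:1} is that because $w \in \O_n(k)$, this map is in fact a $\B_r(n)$-module isomorphism $M^\lambda \xrightarrow{\sim} M^{w(\lambda)}$ (where here $M^\lambda$ is being considered only as a subspace of $V^{\otimes r}$ carrying the restricted $\B_r(n)$-action through the weight space $N^\xi$).

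Finally I assemble the pieces at the level of $N^\xi$. By definition of the $\dot{W}$-action on $\Lamddot(l,r)$, we have $\pi(w(\lambda)) = \dot{w}(\pi(\lambda)) = \dot{w}(\xi)$ for every $\lambda \in \pi^{-1}(\xi)$, and the map $\lambda \mapsto w(\lambda)$ restricts to a bijection $\pi^{-1}(\xi) \to \pi^{-1}(\dot{w}(\xi))$. Taking the direct sum of the individual isomorphisms $M^\lambda \xrightarrow{\sim} M^{w(\lambda)}$ over $\lambda \in \pi^{-1}(\xi)$ therefore yields the desired $\B_r(n)$-module isomorphism
\[
 N^\xi = \bigoplus_{\lambda \in \pi^{-1}(\xi)} M^\lambda \;\xrightarrow{\;\sim\;}\; \bigoplus_{\lambda \in \pi^{-1}(\xi)} M^{w(\lambda)} = N^{\dot{w}(\xi)}.
\]

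I do not expect any real obstacle. The only point one must not skip over is the verification that the specific generators $w_\sigma$ and $\tau_i$ of $\dot{W}$ do lie in $\O_n(k)$, i.e.\ satisfy \eqref{eq:W-criterion}; but this was already established just before the proposition. Everything else is bookkeeping, and the argument is essentially a direct orthogonal analogue of Proposition \ref{prop:1}, made nontrivial only by the need to invoke the commuting actions of $\O_n(k)$ and $\B_r(n)$ to extend the isomorphism from $k\Sym_r$-modules to $\B_r(n)$-modules.
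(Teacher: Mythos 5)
Your proposal is correct and follows essentially the same route as the paper: the isomorphism is the diagonal action of the permutation matrix $w \in \dot{W} \subseteq \O_n(k)$ corresponding to $\dot{w}$, sending $e_{i_1}\otimes\cdots\otimes e_{i_r}$ to $e_{w(i_1)}\otimes\cdots\otimes e_{w(i_r)}$, which maps $N^\xi$ onto $N^{\dot{w}(\xi)}$ because $\pi(w(\lambda))=\dot{w}(\pi(\lambda))$ for all $\lambda\in\pi^{-1}(\xi)$. You merely make explicit the point the paper leaves implicit, namely that $\B_r(n)$-equivariance comes from the commuting actions of $\O_n(k)$ and $\B_r(n)$.
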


\begin{proof}
This is similar to the proof of Proposition \ref{prop:1}.  The
isomorphism is given on basis elements by mapping a simple tensor
$e_{i_1} \otimes \cdots \otimes e_{i_r}$ of weight $\lambda \in
\pi^{-1}(\xi)$ to the simple tensor $e_{w(i_1)} \otimes \cdots
\otimes e_{w(i_r)}$ of weight $w(\lambda)$, where $w \in W$
corresponds to $\dot{w}$. Since $\pi(w(\lambda)) =
\dot{w}(\pi(\lambda))$ and the above holds for every $\lambda \in
\pi^{-1}(\xi)$, the result follows.
\end{proof}

Hence, when studying properties of the modules $N^{\xi}$, we may
as well confine our attention to the ones indexed by dominant
orthogonal weights; i.e., partitions. In the decomposition
\eqref{eq:wtspace2ell} each summand is isomorphic to some
$N^{\xi}$ for some $\xi$ such that $\xi$ is a partition of
$r - 2s$ into not more than $l$ parts, for some non-negative integer
$s \le r/2$. It is easy to see that all such possibilities actually
occur as direct summands in \eqref{eq:wtspace2ell}.

\section{The $\B_r(n)$ decomposition of $(k^n)^{\otimes r}$
in the\\symmetric case, where $n=2l+1$}\label{sec:symmetric-odd}\noindent 
Now we consider the case where $n = 2l+1$, still with the symmetric
bilinear form.  In this case, we have $(l+1)' = l+1$.  Thus, if a
diagonal matrix $\text{diag}(t_1, \dots, t_n) \in \GL_n(k)$ belongs to
$\TO$ then we necessarily have $t^2_{l+1} = 1$, and $t_{i'} =
t_i^{-1}$ for all $i \ne l+1$. Hence, the description of $\TO$ in
\eqref{eq:TO} shows in this case that $\TO$ is isomorphic to the
direct product $(k^\times)^l \times \{\pm 1 \}$, where by $\{ \pm 1\}$
we mean the multiplicative group of square roots of unity. So the
character group $X(\TO)$ is isomorphic to $\Z^l \times (\Z/2\Z)$, and
thus we will identify $X(\TO)$ with $\Z^l \times (\Z/2\Z)$.

There is a group homomorphism $\pi \colon X(T) \to X(\TO)$ given by
restriction: $\pi(\lambda) = \lambda_{\mid \TO}$ for $\lambda \in
X(T)$. One easily checks that in this case, given a character $\xi
\in X(\TO)$, one can extend it to a character $\lambda \in X(T)$ such
that $\lambda_{\mid \TO} = \xi$. It follows that the map $\pi$ is
surjective.

In terms of the identifications $X(T) = \Z^n$ and $X(\TO) = \Z^l
\times (\Z/2\Z)$, the map $\pi$ is given by the rule
\[
  (\lambda_1, \dots, \lambda_n) \mapsto (\lambda_1 - \lambda_{1'},
\dots, \lambda_{l}-\lambda_{l'}, \overline{\lambda}_{l+1})
\]
where $\overline{m}$ denotes the image of an integer $m$ under the
natural quotient map $\Z \to \Z/2\Z$.  

We next consider how to characterize the image of the set
$\Lambda(n,r)$ under the map $\pi$. This case is a bit different from
the even orthogonal case, because of the presence of the $\Z/2\Z$ term
in the image of $\pi$. Note, however, that for any $\lambda \in
\Lambda(n,r)$, the last component $\overline{\lambda}_{l+1}$ of
$\pi(\lambda)$ is uniquely determined by the preceding entries in
$\pi(\lambda)$, as follows.

\begin{lem*}
  When $n = 2l+1$, suppose that $\lambda \in \Lambda(n,r)$ and put
  $t:= (\lambda_1-\lambda_{1'}) + \cdots +
  (\lambda_{l}-\lambda_{l'})$. Then $r-t \equiv \lambda_{l+1}
  \pmod{2}$.
\end{lem*}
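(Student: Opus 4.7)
The plan is to exploit the pairing structure of the indices $\{1, 1', 2, 2', \dots, l, l', l+1\}$ when $n = 2l+1$. Since $i' = n+1-i$, the indices pair up as $\{i, i'\}$ for $i = 1, \dots, l$, and the middle index $l+1$ is self-paired (i.e.\ $(l+1)' = l+1$). This means the sum of all parts of $\lambda$ can be rewritten grouping the paired indices together.

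First I would rewrite the total $r$ using this pairing: since $\lambda \in \Lambda(n,r)$, one has
\[
r = \sum_{i=1}^{n} \lambda_i = \sum_{i=1}^{l} (\lambda_i + \lambda_{i'}) + \lambda_{l+1}.
\]
Next I would subtract $t = \sum_{i=1}^l (\lambda_i - \lambda_{i'})$ from this expression. The $\lambda_i$ terms cancel pairwise, producing
\[
r - t = 2 \sum_{i=1}^{l} \lambda_{i'} + \lambda_{l+1}.
\]
The congruence $r - t \equiv \lambda_{l+1} \pmod{2}$ then follows immediately, since the first term is manifestly even.

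There is no real obstacle here; the entire content of the lemma is the observation that $r$ and $t$ have the same parity modulo the contribution of the unpaired middle coordinate $\lambda_{l+1}$. The only thing one must be careful about is correctly identifying which index is its own prime in the odd case, but this is settled by the definition $i' = n+1-i$ with $n = 2l+1$.
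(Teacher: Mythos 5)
Your proof is correct and is essentially the same computation the paper gives: both subtract $t$ from $r=\sum_i\lambda_i$ using the pairing $\{i,i'\}$ to obtain $r-t=2(\lambda_{1'}+\cdots+\lambda_{l'})+\lambda_{l+1}$, from which the parity statement is immediate. You have simply written out the ``simple calculation'' the paper leaves implicit.
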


\begin{proof}
Since $\lambda_1 + \cdots + \lambda_n = r$, it follows by a simple
calculation that $r-t = 2(\lambda_{1'} + \cdots + \lambda_{l'}) +
\lambda_{l+1}$, and the result follows.
\end{proof}

Thanks to the lemma, we may as well pay attention only to the first
$l$ parts of the image of some $\lambda \in \Lambda(n,r)$ under $\pi$.
Let us write $\Lamdot(l,r)$ for the set of all $(\xi_1, \dots, \xi_l)$
such that $(\xi_1, \dots, \xi_l, \varepsilon) \in \pi(\Lambda(n,r))$.
Then we have a bijection $\pi(\Lambda(n,r)) \to \Lamdot(l,r)$, given
by $(\xi_1, \dots, \xi_l, \varepsilon) \mapsto (\xi_1, \dots, \xi_l)$.
The inverse map is given by $(\xi_1, \dots, \xi_l) \mapsto (\xi_1,
\dots, \xi_l, \varepsilon)$, where $\varepsilon$ is the mod $2$
residue of $r - \xi_1 - \cdots - \xi_l$. This leads to the following
characterization of $\Lamdot(l,r)$.

\begin{prop}
  When $n = 2l+1$, the image of the set $\Lambda(n,r)$ under the map
  $\pi$ may be identified with the set $\Lamdot(l,r)$ consisting of
  all $\xi = (\xi_1, \dots, \xi_l) \in \Z^l$ such that $|\xi_1| +
  \cdots + |\xi_l| = r-s$, where $0 \le s \le r$.
\end{prop}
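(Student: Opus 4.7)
The plan is to mirror the proof of Proposition \ref{prop:2} for the even orthogonal case, verifying both necessity and sufficiency of the condition $|\xi_1| + \cdots + |\xi_l| = r - s$ with $0 \le s \le r$. The key structural difference from the even case is the self-paired middle index $l+1$ (where $(l+1)' = l+1$): the component $\lambda_{l+1}$ contributes to none of the first $l$ entries of $\pi(\lambda)$, nor to any difference $\lambda_i - \lambda_{i'}$ for $i \le l$, so it serves as a free buffer. This is precisely what will allow $s$ to range over all of $\{0, 1, \dots, r\}$ rather than being restricted to even values as in the even case.

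For necessity, given $\xi = \pi(\lambda)$ with $\lambda \in \Lambda(n,r)$, I would choose signs $\epsilon_i \in \{\pm 1\}$ for each $i = 1, \dots, l$ so that $|\xi_i| = \epsilon_i(\lambda_i - \lambda_{i'})$, and then estimate
\[
|\xi_1| + \cdots + |\xi_l| \;\le\; \sum_{i=1}^{l}(\lambda_i + \lambda_{i'}) \;=\; r - \lambda_{l+1} \;\le\; r,
\]
so that the left-hand side equals $r - s$ for some integer $s$ with $0 \le s \le r$. For sufficiency, given such a $\xi$, I would define $\mu \in \Lambda(n, r-s)$ exactly as in Proposition \ref{prop:2}: $\mu_i = \xi_i$ when $\xi_i > 0$, $\mu_{i'} = -\xi_i$ when $\xi_i < 0$, and the remaining entries zero. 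Then I would set $\lambda_{l+1} = s$ and $\lambda_j = \mu_j$ for $j \ne l+1$, producing a $\lambda \in \Lambda(n,r)$ with $\lambda_i - \lambda_{i'} = \xi_i$ for every $i \le l$. Hence $\pi(\lambda) = (\xi_1, \dots, \xi_l, \overline{s})$, which under the bijection $\pi(\Lambda(n,r)) \to \Lamdot(l,r)$ is identified with $\xi$.

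There is no real obstacle here: the lemma immediately preceding the statement already shows that the $\Z/2\Z$ component of $\pi(\lambda)$ is forced by the first $l$ entries together with $r$, so to prove sufficiency one need only exhibit \emph{some} $\lambda \in \Lambda(n,r)$ whose first $l$ differences recover $\xi$, and the middle slot handles this with ease. The parity constraint that forced ``$r - 2s$'' in the even case simply does not appear, which is exactly the content of the ``$r - s$'' in the statement.
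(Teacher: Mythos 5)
Your proof is correct and follows essentially the same route as the paper: the same sign-choice bound for necessity (the paper phrases it as a signed sum of the parts of $\lambda$ excluding $\lambda_{l+1}$), and the same sufficiency construction placing the surplus $s$ in the middle slot $\lambda_{l+1}$ and appealing to the preceding lemma/bijection to handle the $\Z/2\Z$ component.
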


\begin{proof}
If $(\xi_1, \dots, \xi_l, \ep) = \pi(\lambda)$ for $\lambda \in
\Lambda(n,r)$ then $|\xi_1|+\cdots +|\xi_l|$ satisfies the condition
\[
  |\xi_1|+\cdots +|\xi_l| = \epsilon_1(\lambda_1-\lambda_{1'}) +
  \cdots + \epsilon_l(\lambda_l-\lambda_{l'})
\]
where for each $i=1, \dots, l$ the sign $\epsilon_i$ is defined to be
$1$ if $\lambda_i \ge \lambda_{i'}$ and $-1$ otherwise. This is just a
signed sum of the parts of $\lambda$, excluding the $(l+1)$st part
$\lambda_{l+1}$. Thus $|\xi_1|+\cdots +|\xi_l| = r-s$ where $0 \le s
\le r$. This proves the necessity of the condition for membership in
$\Lamddot(l,r)$.

It remains to prove the sufficiency of the condition. Given $\xi \in
\Z^l$ satisfying the condition $|\xi_1| + \cdots + |\xi_l|=r-s$, where
$0 \le s \le r$, we define a corresponding $\lambda \in
\Lambda(n,r-s)$ as follows: put $\lambda_i = \xi_i$ if $\xi_i > 0$,
put $\lambda_{i'} = -\xi_i$ if $\xi_i < 0$, put $\lambda_{l+1}=s$, and
put all the other entries of $\lambda = (\lambda_1, \dots, \lambda_n)$
to zero.  Then it easily checked that $\pi(\lambda)$ identifies with
$\xi$ under the correspondence $(\xi_1,\dots,\xi_l,\ep) \to
(\xi_1,\dots,\xi_l)$.
\end{proof}

For each $\xi \in \Lamdot(l,r)$, the fiber $\pi^{-1}(\xi)$ may be
computed as follows. Let $|\xi_1| + \cdots + |\xi_l|=r-s$, where $0
\le s \le r$, and let $\lambda$ be defined in terms of $\xi$ as in the
second paragraph of the proof of the proposition. For each integer $t$
such that $0 \le 2t \le s$, let $\mu$ be the same as $\lambda$ except
that $\lambda_{l+1}=s$ is replaced by $s-2t$. Then for any $\nu \in
\Lambda(l,t)$ we get a member 
\[
  (\mu_1+\nu_1, \dots, \mu_l+\nu_l, s-2t, \mu_{l'}+\nu_l, \dots,
\mu_{1'}+\nu_1)
\]
of the fiber $\pi^{-1}(\xi)$. Thus, the fiber in this case has
cardinality given by the sum $\sum_{0 \le 2t \le s} |\Lambda(l,t)|$.
By grouping terms in the direct sum decomposition
\eqref{eq:wtspaceLambda} according to the fibers we obtain
\begin{equation}\label{eq:wtspace2ell+1} \textstyle
   (k^n)^{\otimes r} = V^{\otimes r} = \bigoplus_{\xi \in
    \Lamdot(l,r)} \big(\bigoplus_{\lambda \in \pi^{-1}(\xi)}
  M^\lambda \big) = \bigoplus_{\xi \in \Lamdot(l,r)} N^{\xi}
\end{equation}
where we define $N^{\xi}$ for any $\xi \in \Lamdot(l,r)$ by
$N^{\xi}:= \bigoplus_{\lambda \in \pi^{-1}(\xi)} M^{\lambda}$.

The $N^{\xi}$ are just the weight spaces under the action of the
abelian group $\TO$, so \eqref{eq:wtspace2ell+1} gives the weight
space decomposition of tensor space as a $\TO$-module.

Since the actions of $\O_n(k)$ and $\B_r(n)$ commute, it is clear
that each weight space $N^{\xi}$ for $\xi\in \Lamdot(l,r)$ is
a right $\B_r(n)$-module. Hence \eqref{eq:wtspace2ell+1} is a
decomposition of tensor space $(k^n)^{\otimes r}$ as a
$\B_r(n)$-module, and we have achieved our goal in the case $n =
2l+1$.

The Weyl group $\dot{W}$ of $\O_n(k)$ in the case $n=2l+1$ is the
same as in the case $n=2l$; it is isomorphic to the semidirect
product $\{\pm 1\}^l \rtimes \Sym_l$, the group of signed
permutations on $l$ letters. We can realize $\dot{W}$ as a subgroup
of $\O_n(k)$ in this case as well, by taking the intersection of $W$
with $\O_n(k)$.  A given $w \in W$ lies within this intersection if and
only if the condition $(e_{w^{-1}(i)} , e_{w^{-1}(j)}) = (e_i, e_j)$
holds for all $i,j$. Thus, $\dot{W}$ is the set of $w \in W$ such that
\begin{equation}\label{eq:W-criterion-2}
  \delta_{w^{-1}(i), w^{-1}(j)'} = \delta_{i,j'} \qquad \text{for all }
  i,j = 1, \dots, n.
\end{equation}
Thus, for $w \in W$ to belong to $\dot{W}$, it is necessary that
$w^{-1}(l+1) = l+1$, or, equivalently, $w(l+1)=l+1$.  Then it is easy
to check by direct calculation that for any given $\sigma \in
\Sym_l$, if we define a corresponding $w_\sigma\in W$ such that
\[
  w_\sigma(i) = 
  \begin{cases}
  \sigma(i) & \text{if } 1 \le i \le l\\
  \sigma(i') & \text{if } l+1 \le i \le 2l  
  \end{cases}
\]
then $\sigma$ satisfies the condition
\eqref{eq:W-criterion-2}. Furthermore, the transposition $\tau_i$ that
interchanges $i$ with $i'$ also satisfies \eqref{eq:W-criterion-2}, and
thus $\dot{W}$ may be identified with the subgroup of $W$ generated by
the $w_\sigma$ ($\sigma \in \Sym_l$) and the $\tau_i$ ($i = 1, \dots,
l$). 

This subgroup acts on $\Lambda(n,r)$ by restriction of the action of
$W$. This induces a corresponding action of $\dot{W}$ on the set
$\Lamdot(l,r)$, such that $\dot{w}(\xi) = \pi( w(\lambda) )$ if $w
\in W$ corresponds to $\dot{w} \in \dot{W}$ and $\xi =
\pi(\lambda)$.  Since $\tau_i$ sends $\xi = (\xi_1, \dots,
\xi_l)$ to $(\xi_1, \dots, \xi_{i-1}, -\xi_i,
\xi_{i+1}, \dots, \xi_l)$, and $w_\sigma$ sends $\xi$ to
$\sigma(\xi) = (\xi_{\sigma^{-1}(1)}, \dots,
\xi_{\sigma^{-1}(l)})$, it follows that $\dot{W}$ acts on the set
$\Lamdot(l,r)$ by signed permutations.

Thus, a fundamental domain for this action is the set
$\Lamdot^+(l,r)$ consisting of all $\xi \in \Lamdot(l,r)$ such
that $\xi_1 \ge \xi_2 \ge \cdots \ge \xi_l \ge 0$. We call
elements of this set dominant orthogonal weights. So, in other words,
each orbit of $\Lamdot(l,r)$ contains a unique dominant orthogonal
weight. Notice that a dominant orthogonal weight is the same as a
partition of not more than $l$ parts.

\begin{prop}
  For any $\dot{w} \in \dot{W}$, $\xi \in \Lamdot(l,r)$, the right
  $\B_r(n)$-modules $N^{\xi}$ and $N^{\dot{w}(\xi)}$ are isomorphic.
\end{prop}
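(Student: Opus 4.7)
The plan is to mirror the proof of Proposition \ref{prop:3}, with only a small adjustment to accommodate the middle coordinate $l+1$. Given $\dot{w} \in \dot{W}$, let $w \in W$ be the corresponding element, realized as an $n \times n$ permutation matrix lying in $\O_n(k)$. Define a $k$-linear endomorphism $\varphi_w$ of $V^{\otimes r}$ on the standard tensor basis by
\[
\varphi_w(e_{i_1} \otimes \cdots \otimes e_{i_r}) = e_{w(i_1)} \otimes \cdots \otimes e_{w(i_r)}.
\]
This is just the action of $w$ via the diagonal $\GL_n(k)$-action, so $\varphi_w$ is invertible with inverse $\varphi_{w^{-1}}$. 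Since $w \in \O_n(k)$ and the actions of $\O_n(k)$ and $\B_r(n)$ on $V^{\otimes r}$ commute, $\varphi_w$ is automatically $\B_r(n)$-equivariant, hence a $\B_r(n)$-module automorphism of all of $V^{\otimes r}$.

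Next I would observe that $\varphi_w$ carries $M^\lambda$ bijectively onto $M^{w(\lambda)}$ for each $\lambda \in \Lambda(n,r)$, since a simple tensor whose multi-index contains $\lambda_j$ copies of $j$ is sent to one whose multi-index contains $\lambda_j$ copies of $w(j)$, i.e.\ a simple tensor of weight $w(\lambda)$. It remains to verify the compatibility $\pi \circ w = \dot{w} \circ \pi$ on $\Lambda(n,r)$. This reduces, via the generators $\tau_i$ and $w_\sigma$ of $\dot{W}$, to a direct computation from the explicit formula for $\pi$ together with the previously established formulas for how $\tau_i$ and $w_\sigma$ act on $\Lamdot(l,r)$. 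Once this identity is in hand, $\varphi_w$ sends the fiber $\pi^{-1}(\xi)$ bijectively to the fiber $\pi^{-1}(\dot{w}(\xi))$, so the direct sum $N^\xi = \bigoplus_{\lambda \in \pi^{-1}(\xi)} M^\lambda$ is taken isomorphically onto $N^{\dot{w}(\xi)} = \bigoplus_{\mu \in \pi^{-1}(\dot{w}(\xi))} M^\mu$.

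The only novelty compared to the even case concerns the middle coordinate: for $w \in \dot{W}$ one necessarily has $w(l+1) = l+1$, so $\lambda_{l+1}$ is preserved by $w$. By the Lemma this forces the mod-$2$ residue $\overline{\lambda}_{l+1}$ -- that is, the suppressed $\Z/2\Z$-component of $\pi(\lambda)$ -- to be preserved as well, which is exactly what is needed for the identification of $\pi(\Lambda(n,r))$ with $\Lamdot(l,r)$ to be compatible with the two $\dot{W}$-actions. I do not anticipate any serious obstacle; the argument is essentially the even case together with this easy observation about the fixed middle coordinate.
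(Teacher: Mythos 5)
Your proof is correct and follows essentially the same route as the paper, which simply refers back to the proof of Proposition \ref{prop:3}: the map $e_{i_1}\otimes\cdots\otimes e_{i_r}\mapsto e_{w(i_1)}\otimes\cdots\otimes e_{w(i_r)}$ is $\B_r(n)$-equivariant because $w$ lies in $\O_n(k)$, carries $M^\lambda$ to $M^{w(\lambda)}$, and respects the fibers of $\pi$ via $\pi\circ w=\dot{w}\circ\pi$. Your extra remark that $w(l+1)=l+1$ handles the suppressed $\Z/2\Z$-component is exactly the small adjustment the odd case needs, and is consistent with the paper's observation that elements of $\dot{W}$ fix $l+1$.
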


The proof is similar to the proof of Proposition \ref{prop:3}.

Hence, when studying properties of the modules $N^{\xi}$, we may as
well confine our attention to the ones indexed by dominant orthogonal
weights; i.e., partitions. In the decomposition
\eqref{eq:wtspace2ell+1} each summand is isomorphic to some $N^{\xi}$
for some $\xi$ such that $\xi$ is a partition of $r - s$ into not
more than $l$ parts, for some non-negative integer $s \le r$. It
is easy to see that all such possibilities actually occur as direct
summands in the decomposition \eqref{eq:wtspace2ell+1}.

\section{The $\B_r(-n)$ decomposition of $(k^n)^{\otimes r}$ 
in the skew-symmetric case, where $n=2l$}\label{sec:skew}\noindent
In this case we assume throughout that $n=2l$, and let the field $k$
be arbitrary.  We define $\Sp_n(k)$ to be the group of isometries of
$V=k^n$ with respect to the skew-symmetric form $(\ ,\ )$ defined in
Section \ref{sec:main}.  Then the action of $\B_r(-n)$ on tensor space
$V^{\otimes r} = (k^n)^{\otimes r}$, defined in Section
\ref{sec:Brauer}, commutes with the natural action of $\\Sp_n(k)$
(given by restricting the action of $\GL_n(k)$).

Let $\TO$ be the abelian subgroup of $\Sp_n(k)$ consisting of the
diagonal matrices in $\O_n(k)$. Thus, a diagonal matrix
$\text{diag}(t_1, \dots, t_n) \in \GL_n(k)$ belongs to $\TO$ if and
only if 
\begin{equation}\label{eq:TO-Sp}
t_i t_{i'} = 1 \quad\text{ for all $i = 1, \dots, n$.} 
\end{equation}
As before, the description of $\TO$ in \eqref{eq:TO-Sp} shows in this
case that $\TO$ is isomorphic to $(k^\times)^l$, and $X(\TO)$ is
isomorphic to $\Z^l$, so we identify $X(\TO)$ with $\Z^l$.

The group homomorphism $\pi \colon X(T) \to X(\TO)$ given by
restriction is surjective, for the same reason as before.  In terms of
the identifications $X(T) = \Z^n$ and $X(\TO) = \Z^l$, the map $\pi$
is given by the rule
\[
  (\lambda_1, \dots, \lambda_n) \mapsto (\lambda_1 -
   \lambda_{1'}, \dots, \lambda_{l}-\lambda_{l'}). 
\]
The image$\Lamddot(l,r)$ of the set $\Lambda(n,r)$ under the map $\pi$
has the same characterization as in the even symmetric case.

\begin{prop}
  When $n = 2l$, the image of the set $\Lambda(n,r)$ under the map
  $\pi$ is the set $\Lamddot(l,r)$ of all $\xi = (\xi_1, \dots, \xi_l)
  \in \Z^l$ such that $|\xi_1| + \cdots + |\xi_l| =r-2s$, where $0 \le
  2s \le r$.
\end{prop}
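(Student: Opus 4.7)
The plan is to observe that this proposition is formally identical to Proposition \ref{prop:2} from the even orthogonal case: the map $\pi$ obeys the same rule, the domain $\Lambda(n,r)$ is unchanged, and the target $\Lamddot(l,r)$ is defined identically. Nothing in the proof actually depends on which bilinear form is used on $V$. The defining condition \eqref{eq:TO-Sp} for $\TO \subset \Sp_n(k)$ coincides with the condition \eqref{eq:TO} for $\TO \subset \O_n(k)$, so the statement reduces to a question about restriction of characters from $T$ to $\TO$, which is the same in both settings. I would therefore simply recycle the earlier proof verbatim.

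For necessity, given $\xi = \pi(\lambda)$ with $\lambda \in \Lambda(n,r)$, I would write $|\xi_i| = \epsilon_i(\lambda_i - \lambda_{i'})$ with $\epsilon_i = \pm 1$, so that $|\xi_1| + \cdots + |\xi_l|$ becomes a $\pm 1$ signed sum of all $n$ parts of $\lambda$. This signed sum has the same parity as the unsigned sum $\lambda_1 + \cdots + \lambda_n = r$ and lies in the interval $[0,r]$, so it must equal $r - 2s$ for some integer $s$ with $0 \le 2s \le r$.

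For sufficiency, given $\xi \in \Z^l$ with $|\xi_1| + \cdots + |\xi_l| = r - 2s$, I would first build $\mu \in \Lambda(n, r-2s)$ by setting $\mu_i = \xi_i$ whenever $\xi_i > 0$, $\mu_{i'} = -\xi_i$ whenever $\xi_i < 0$, and $\mu_j = 0$ in all remaining positions. Then, for any $\nu = (\nu_1, \dots, \nu_l) \in \Lambda(l,s)$, I would form
\[
\lambda = (\mu_1 + \nu_1, \dots, \mu_l + \nu_l, \mu_{l+1} + \nu_l, \mu_{l+2} + \nu_{l-1}, \dots, \mu_{2l} + \nu_1),
\]
so that $\lambda \in \Lambda(n,r)$, and the $\nu$-contributions cancel within each pair $(i,i')$, giving $\pi(\lambda) = \xi$.

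I expect no genuine obstacle: the only thing worth double-checking is that \eqref{eq:TO-Sp} indeed produces the same diagonal torus as \eqref{eq:TO}, which is immediate. As the author puts it just before the statement, the image has \emph{the same characterization as in the even symmetric case}.
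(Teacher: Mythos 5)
Your proposal is correct and matches the paper's own treatment: the paper proves this Proposition simply by citing the proof of Proposition \ref{prop:2}, whose necessity (parity of the signed sum) and sufficiency (building $\mu$ and then $\lambda$ by adding $\nu$ in order to the first $l$ entries and in reverse order to the last $l$) arguments you have reproduced faithfully. Your observation that nothing in that argument depends on the choice of bilinear form is exactly the point the author relies on.
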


The proof is the same as in the even symmetric case; see the proof of
Proposition \ref{prop:2}.

The fiber $\pi^{-1}(\xi)$ for $\xi \in \Lamddot(l,r)$ has in this case
the same description as in the even symmetric case; see the remarks
following the proof of Proposition \ref{prop:2}.  By grouping terms in
the direct sum decomposition \eqref{eq:wtspaceLambda} according to the
fibers we obtain
\begin{equation}\label{eq:wtspace-Sp} \textstyle
   (k^n)^{\otimes r} = V^{\otimes r} = \bigoplus_{\xi \in
    \Lamddot(l,r)} \big(\bigoplus_{\lambda \in \pi^{-1}(\xi)}
  M^\lambda \big) = \bigoplus_{\xi \in \Lamddot(l,r)} N^{\xi}
\end{equation}
where we define $N^{\xi}$ for any $\xi \in \Lamddot(l,r)$ by
$N^{\xi}:= \bigoplus_{\lambda \in \pi^{-1}(\xi)} M^{\lambda}$.

The $N^{\xi}$ are just the weight spaces under the action of the
abelian group $\TO$, so \eqref{eq:wtspace-Sp} gives the weight space
decomposition of tensor space as a $\TO$-module.

Since the actions of $\Sp_n(k)$ and $\B_r(-n)$ commute, it is clear
that each weight space $N^{\xi}$ for $\xi\in \Lamddot(l,r)$ is a right
$\B_r(-n)$-module. Hence \eqref{eq:wtspace-Sp} is a decomposition of
tensor space $(k^n)^{\otimes r}$ as a $\B_r(n)$-module.

The Weyl group $\dot{W}$ of $\Sp_n(k)$ is again isomorphic to the
semidirect product $\{\pm 1\}^l \rtimes \Sym_l$, the group of signed
permutations on $l$ letters. Again, the group $\dot{W}$ acts on the
set $\Lamddot(l,r)$ by signed permutations.  Thus, a fundamental
domain for this action is the set $\Lamddot^+(l,r)$.

\begin{prop}\label{prop:7}
  For any $\dot{w} \in \dot{W}$, $\xi \in \Lamddot(l,r)$, the
  right $\B_r(n)$-modules $N^{\xi}$ and $N^{\dot{w}(\xi)}$ are
  isomorphic.
\end{prop}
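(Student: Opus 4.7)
The plan is to adapt the proof of Proposition \ref{prop:3} to the symplectic setting, the only real change being that the lifts of elements of $\dot{W}$ to $\Sp_n(k)$ must be \emph{signed} permutation matrices rather than honest permutations. The key observation is unchanged: once $\dot{W}$ is realized inside $\Sp_n(k)$ in a way compatible with its action on $\Lamddot(l,r)$, the desired isomorphism $N^\xi \cong N^{\dot{w}(\xi)}$ will be induced by the action of the lift of $\dot{w}$, since this action commutes with the $\B_r(-n)$-action on $V^{\otimes r}$.

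To construct these lifts, for each $\sigma \in \Sym_l$ take $\tilde{w}_\sigma \in \GL_n(k)$ sending $e_i \mapsto e_{\sigma(i)}$ for $1 \le i \le l$ and $e_{i'} \mapsto e_{\sigma(i)'}$; a direct check using $(e_i, e_{j'}) = \varepsilon_i \delta_{ij}$ shows $\tilde{w}_\sigma \in \Sp_n(k)$. For a sign flip, the naive transposition $\tau_i$ exchanging $i$ and $i'$ does \emph{not} preserve the form, because $\varepsilon_{i'} = -\varepsilon_i$; the remedy is the signed transposition $\tilde{\tau}_i$ sending $e_i \mapsto e_{i'}$, $e_{i'} \mapsto -e_i$, and fixing every other basis vector, which one checks lies in $\Sp_n(k)$. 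Together the $\tilde{w}_\sigma$ and $\tilde{\tau}_i$ generate a subgroup of $\Sp_n(k)$ isomorphic to $\dot{W}=\{\pm 1\}^l \rtimes \Sym_l$, acting on the characters of $\TO$ by signed permutations as required.

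Given any $\dot{w} \in \dot{W}$ with such a lift $\tilde{w}\in\Sp_n(k)$, the diagonal action of $\tilde{w}$ on $V^{\otimes r}$ is a $\B_r(-n)$-module automorphism since $\Sp_n(k)$ and $\B_r(-n)$ commute. Since $\tilde{w}$ realizes $\dot{w}$ on $X(\TO)$, it maps $N^\xi$ bijectively onto $N^{\dot{w}(\xi)}$: a simple tensor $e_{i_1} \otimes \cdots \otimes e_{i_r} \in M^\lambda \subset N^\xi$ is sent to $\pm\, e_{\tilde{w}(i_1)} \otimes \cdots \otimes e_{\tilde{w}(i_r)} \in M^{w(\lambda)} \subset N^{\dot{w}(\xi)}$, where $w$ is the underlying unsigned permutation and the sign depends on how many $\tilde{\tau}_i$ factors act nontrivially on the indices. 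Restricting to $N^\xi$ yields the desired isomorphism. The only substantive change over the orthogonal case is the sign appearing in $\tilde{\tau}_i$, which is the main (though minor) technical point, dictated by the skew-symmetry of the form; apart from that, the argument proceeds formally as in Proposition \ref{prop:3}.
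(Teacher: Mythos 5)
Your proof is correct and follows the same route as the paper, which simply refers back to the proof of Proposition \ref{prop:3}: lift $\dot{w}$ to an element of the isometry group that normalizes $\TO$, and use that its diagonal action commutes with the $\B_r(-n)$-action to carry $N^{\xi}$ isomorphically onto $N^{\dot{w}(\xi)}$. Your observation that the unsigned transpositions $\tau_i$ do not lie in $\Sp_n(k)$, so that the lift must be the signed map $e_i \mapsto e_{i'}$, $e_{i'} \mapsto -e_i$, is precisely the adjustment needed to make the paper's ``similar to Proposition \ref{prop:3}'' argument literally work in the skew-symmetric case (the resulting signs on basis tensors are harmless, since they do not affect bijectivity between weight spaces). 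One minor inaccuracy: when $\operatorname{char} k \ne 2$ the subgroup of $\Sp_n(k)$ generated by the $\tilde{w}_\sigma$ and $\tilde{\tau}_i$ is not isomorphic to $\dot{W}$ (for instance $\tilde{\tau}_i^{\,2}$ acts as $-1$ on the span of $e_i, e_{i'}$, so $\tilde{\tau}_i$ has order $4$), but this does not affect the argument, since all that is needed is that each $\dot{w}$ admits \emph{some} lift in $\Sp_n(k)$ normalizing $\TO$ and inducing $\dot{w}$ on $X(\TO)$, which your construction supplies.
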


The proof is similar to the proof of Proposition \ref{prop:3}. 

Hence, when studying properties of the modules $N^{\xi}$, we may as
well confine our attention to the ones indexed by dominant orthogonal
weights; i.e., partitions. In the decomposition \eqref{eq:wtspace-Sp}
each summand is isomorphic to some $N^{\xi}$ for some $\xi$ such that
$\xi$ is a partition of $r - 2s$ into not more than $l$ parts, for
some non-negative integer $s \le r/2$. As before, it is easy to see
that all such possibilities actually occur as direct summands in
\eqref{eq:wtspace-Sp}.

\end{document}